\newtheorem{theorem}{Theorem}[section]                    
\newtheorem{proposition}[theorem]{Proposition}            
\newtheorem{lemma}[theorem]{Lemma}
\newtheorem{remark}[theorem]{Remark}
\begin{document}
\title{A Remark on Orbital Free Entropy}
\author[Y.~Ueda]{Yoshimichi Ueda}
\address{
Graduate School of Mathematics, 
Kyushu University, 
Fukuoka, 810-8560, Japan
}
\email{ueda@math.kyushu-u.ac.jp}
\thanks{Supported by Grant-in-Aid for Challenging Exploratory Research 16K13762.}
\subjclass[2010]{46L54, 52C17, 28A78, 94A17.}
\keywords{Free independence; Free entropy; Free mutual information; Orbital free entropy}
\date{Feb. 20th, 2017}

\begin{abstract} A lower estimate of the orbital free entropy $\chi_\mathrm{orb}$ under unitary conjugation is proved, and it together with Voiculescu's observation shows that the conjectural  exact formula relating $\chi_\mathrm{orb}$ to the free entropy $\chi$ breaks in general in contrast to the case when given random multi-variables are all hyperfinite. 
\end{abstract} 

\maketitle

\allowdisplaybreaks{

\section{Introduction} 

Voiculescu's theory of free entropy (see \cite{Voiculescu:Survey}) has two alternative approaches; the microstate free entropy $\chi$ and the microstate-free free entropy $\chi^*$, both of which are believed to define the \emph{same} free entropy (at least under the $R^\omega$-embeddability assumption). Similarly to the microstate free entropy $\chi$, the orbital free entropy $\chi_\mathrm{orb}(\mathbf{X}_1,\dots,\mathbf{X}_n)$ of given random self-adjoint multi-variables $\mathbf{X}_i$ was also constructed based on an appropriate notion of microstates (called `orbital microstates', i.e., the `unitary orbit part' of the usual matricial microstates appearing in the definition of $\chi$) in \cite{HiaiMiyamotoUeda:IJM09},\cite{Ueda:IUMJ14}. The free entropy should be understood, in some senses, as the `size' of a given set of non-commutative random variables, while $\chi_\mathrm{orb}(\mathbf{X}_1,\dots,\mathbf{X}_n)$ \emph{precisely} measures how far the positional relation among the $W^*(\mathbf{X}_i)$ are from the freely independent positional relation in the ambient tracial $W^*$-probability space. In fact, we have known that $\chi_\mathrm{orb}(\mathbf{X}_1,\dots,\mathbf{X}_n)$ is non-positive and equals zero if and only if the $W^*(\mathbf{X}_i)$ are freely independent (modulo the $R^\omega$-embeddability assumption). This fact and the other general properties of $\chi_\mathrm{orb}$ suggest that the minus orbital free entropy $-\chi_\mathrm{orb}$ is a microstate variant of Voiculescu's free mutual information $i^*$ whose definition is indeed `microstate-free'. Hence it is natural to expect that those two quantities have the same properties. 

In \cite{Voiculescu:AdvMath99} Voiculescu (implicitly) proved that 
$$
i^*(v_1 A_1 v_1^*;\dots;v_n A_n v_n^*: B) \leq -(\Sigma(v_1)+\cdots+\Sigma(v_n))
$$ 
holds for unital $*$-subalgebras $A_1,\dots,A_n,B$ of a tracial $W^*$-probability space and a freely independent family of unitaries $v_1,\dots,v_n$ in the same $W^*$-probability space such that the family is freely independent of $A_1\vee\cdots\vee A_n \vee B$. Here, we set $\Sigma(v_i) := \int_\mathbb{T}\int_\mathbb{T}\log|\zeta_1 - \zeta_2|\,\mu_{v_i}(d\zeta_1)\mu_{v_i}(d\zeta_2)$ with the spectral distribution measure $\mu_{v_i}$ of $v_i$ with respect to $\tau$. In fact, this inequality immediately follows from \cite[Proposition 9.4]{Voiculescu:AdvMath99} (see Proposition 10.11 in the same paper). Its natural `orbital counterpart' should be
$$
\chi_\mathrm{orb}(v_1\mathbf{X}_1 v_1^*,\dots,v_n\mathbf{X}_n v_n^*) \geq \Sigma(v_1)+\cdots+\Sigma(v_n)
$$ 
with regarding $A_i = W^*(\mathbf{X}_i)$ ($1 \leq i \leq n$) and $B = \mathbb{C}$. We will prove a slightly improved inequality (Theorem \ref{T1}). The inequality is nothing but a further evidence about the unification conjecture between $i^*$ and $\chi_\mathrm{orb}$. However, more importantly, the inequality together with Voiculescu's discussion \cite[\S\S14.1]{Voiculescu:AdvMath99} answers, in the negative, the question on the expected relation between $\chi_\mathrm{orb}$ and $\chi$. Namely, the main formula in \cite{HiaiMiyamotoUeda:IJM09} (see \eqref{Eq7} below), which we call the exact formula relating $\chi_\mathrm{orb}$ to $\chi$, does not hold without any additional assumptions. 

In the final part of this short note we also give an observation about the question of whether or not there is a variant of $\chi_\mathrm{orb}$ satisfying both the `$W^*$-invariance' for each given random self-adjoint multi-variable and the exact formula relating $\chi_\mathrm{orb}$ to $\chi$ in general. Here it is fair to mention two other attempts due to Biane--Dabrowski \cite{BianeDabrowski:AdvMath13} and Dabrowski \cite{Dabrowki:arXiv:1604.06420}, but this question is not yet resolved at the moment of this writing.

\section{Preliminaries} 

Throughout this note, $(\mathcal{M},\tau)$ denotes a tracial $W^*$-probability space, that is, $\mathcal{M}$ is a finite von Neumann algebra and $\tau$ a faithful normal tracial state on $\mathcal{M}$. We denote the $N\times N$ self-adjoint matrices by $M_N(\mathbb{C})^\mathrm{sa}$ and the Haar probability measure on the $N\times N$ unitary group $\mathrm{U}(N)$ by $\gamma_{\mathrm{U}(N)}$. 

\subsection{Orbital free entropy} (\cite{HiaiMiyamotoUeda:IJM09},\cite{Ueda:IUMJ14}.) Let $\mathbf{X}_i = (X_{i1},\dots,X_{ir(i)})$, $1 \leq i \leq n$, be arbitrary random self-adjoint multi-variables in $(\mathcal{M},\tau)$. We recall an expression of $\chi_\mathrm{orb}(\mathbf{X}_1,\dots,\mathbf{X}_n)$ that we will use in this note. Let $R > 0$ be given possibly with $R=\infty$, and $m \in \mathbb{N}$ and $\delta > 0$ be arbitrarily given. For given multi-matrices $\mathbf{A}_i = (A_{ij})_{j=1}^{r(i)} \in (M_N(\mathbb{C})^\mathrm{sa})^{r(i)}$, $1 \leq i \leq n$, the set of orbital microstates $\Gamma_\mathrm{orb}(\mathbf{X}_1,\dots,\mathbf{X}_n:(\mathbf{A}_i)_{i=1}^n\,;\,N,m,\delta)$ is defined to be all $(U_i)_{i=1}^n \in \mathrm{U}(N)^n$ such that  
$$
\big|\mathrm{tr}_N(h((U_i \mathbf{A}_i U_i^*)_{i=1}^n)) - \tau(h((\mathbf{X}_i)_{i=1}^n))\big| < \delta
$$
holds whenever $h$ is a $*$-monomial in $(r(1)+\cdots+r(n))$ indeterminates of degree not greater than $m$. Similarly, $\Gamma_R(\mathbf{X}_i;N,m,\delta)$ denotes the set of all $\mathbf{A} \in ((M_N(\mathbb{C})^\mathrm{sa})_R)^{r(i)}$ such that 
$$
\big|\mathrm{tr}_N(h(\mathbf{A})) - \tau(h(\mathbf{X}_i))\big| < \delta
$$
holds whenever $h$ is a $*$-monomial in $r(i)$ indeterminates of degree not greater than $m$. It is rather trivial that if some $\mathbf{A}_i$ sits in $((M_N(\mathbb{C})^\mathrm{sa})_R)^{r(i)}\setminus\Gamma_R(\mathbf{X}_i\,;\,N,m,\delta)$, then $\Gamma_\mathrm{orb}(\mathbf{X}_1,\dots,\mathbf{X}_n:(\mathbf{A}_i)_{i=1}^n\,;\,N,m,\delta)$ must be the empty set. Hence we define
\begin{align}\label{Eq1}
&\bar{\chi}_{\mathrm{orb},R}(\mathbf{X}_1,\dots,\mathbf{X}_n\,;\,N,m,\delta) \notag\\
&\quad\quad:= 
\sup_{\mathbf{A}_i \in (M_N(\mathbb{C})^\mathrm{sa})_R^{r(i)}}\log\Big(\gamma_{\mathrm{U}(N)}^{\otimes n}\big(\Gamma_\mathrm{orb}(\mathbf{X}_1,\dots,\mathbf{X}_n:(\mathbf{A}_i)_{i=1}^n\,;\,N,m,\delta)\big)\Big) \\
&\quad\quad\,= 
\sup_{\mathbf{A}_i \in \Gamma_R(\mathbf{X}_i\,;\,N,m,\delta)}\log\Big(\gamma_{\mathrm{U}(N)}^{\otimes n}\big(\Gamma_\mathrm{orb}(\mathbf{X}_1,\dots,\mathbf{X}_n:(\mathbf{A}_i)_{i=1}^n\,;\,N,m,\delta)\big)\Big) \notag
\end{align}
(defined to be $-\infty$ if some $\Gamma_R(\mathbf{X}_i\,;\,N,m,\delta) = \emptyset$), and we define  
\begin{equation}\label{Eq2}
\chi_{\mathrm{orb},R}(\mathbf{X}_1,\dots,\mathbf{X}_n) 
:= 
\lim_{\substack{m\to\infty \\ \delta\searrow0}} \varlimsup_{N\to\infty} 
\frac{1}{N^2}\bar{\chi}_{\mathrm{orb},R}(\mathbf{X}_1,\dots,\mathbf{X}_n\,;\,N,m,\delta). 
\end{equation} 
It is known, see \cite[Corollary 2.7]{Ueda:IUMJ14}, that $\chi_\mathrm{orb}(\mathbf{X}_1,\dots,\mathbf{X}_n) := \sup_{R > 0}\chi_{\mathrm{orb},R}(\mathbf{X}_1,\dots,\mathbf{X}_n) = \chi_{\mathrm{orb},R}(\mathbf{X}_1,\dots,\mathbf{X}_n)$ holds whenever $R \geq \max\{\Vert X_{ij}\Vert_\infty\,|\,1 \leq i \leq n, 1 \leq j \leq r(i)\}$. 

Let $\mathbf{v} = (v_1,\dots,v_s)$ be an $s$-tuple of unitaries in $(\mathcal{M},\tau)$. For given multi-matrices $\mathbf{A}_i = (A_{ij})_{j=1}^{r(i)} \in (M_N(\mathbb{C})^\mathrm{sa})^{r(i)}$, $1 \leq i \leq n$, $\Gamma_\mathrm{orb}(\mathbf{X}_1,\dots,\mathbf{X}_n:(\mathbf{A}_i)_{i=1}^n:\mathbf{v}\,;\,N,m,\delta)$ in presence of $\mathbf{v}$ is defined to be all $(U_i)_{i=1}^n \in \mathrm{U}(N)^n$ such that there exists $\mathbf{V} = (V_1,\dots,V_s) \in U(N)^s$ so that 
$$
\big|\mathrm{tr}_N(h((U_i \mathbf{A}_i U_i^*)_{i=1}^n,\mathbf{V})) - \tau(h((\mathbf{X}_i)_{i=1}^n,\mathbf{v}))\big| < \delta
$$
holds whenever $h$ is a $*$-monomial in $(r(1)+\cdots+r(n)+s)$ indeterminates of degree not greater than $m$. Then $\chi_{\mathrm{orb},R}(\mathbf{X}_1,\dots,\mathbf{X}_n:\mathbf{v})$ can be obtained in the same way as above with $\Gamma_\mathrm{orb}(\mathbf{X}_1,\dots,\mathbf{X}_n:(\mathbf{A}_i)_{i=1}^n:\mathbf{v}\,;\,N,m,\delta)$ in place of $\Gamma_\mathrm{orb}(\mathbf{X}_1,\dots,\mathbf{X}_n:(\mathbf{A}_i)_{i=1}^n\,;\,N,m,\delta)$. Remark that $\chi_\mathrm{orb}(\mathbf{X}_1,\dots,\mathbf{X}_n:\mathbf{v}) := \sup_{R>0}\chi_{\mathrm{orb},R}(\mathbf{X}_1,\dots,\mathbf{X}_n:\mathbf{v}) = \chi_{\mathrm{orb},R}(\mathbf{X}_1,\dots,\mathbf{X}_n:\mathbf{v})$ also holds if $R \geq \max\{\Vert X_{ij}\Vert_\infty\,|\,1 \leq i \leq n, 1 \leq j \leq r(i)\}$. Moreover, $\chi_\mathrm{orb}(\mathbf{X}_1,\dots,\mathbf{X}_n:\mathbf{v}) \leq \chi_\mathrm{orb}(\mathbf{X}_1,\dots,\mathbf{X}_n)$ trivially holds. 

\subsection{Microstate free entropy for unitaries} (See \cite[\S6.5]{HiaiPetz:Book}.) Let $\mathbf{v} = (v_1,\dots,v_n)$ be an $n$-tuple of unitaries in $\mathcal{M}$. We recall the microstate free entropy $\chi_u(\mathbf{v})$. Let $m \in \mathbb{N}$ and $\delta > 0$ be arbitrarily given. For every $N \in \mathbb{N}$ we define $\Gamma_u(\mathbf{v};N,m,\delta)$ to be the set of all $\mathbf{V} = (V_1,\dots,V_n) \in \mathrm{U}(N)^n$ such that $\big|\mathrm{tr}_N(h(\mathbf{V})) - \tau(h(\mathbf{v}))\big| < \delta$ holds whenever $h$ is a $*$-monomial in $n$ indeterminates of degree not greater than $m$. Then 
\begin{equation}\label{Eq3} 
\chi_u(\mathbf{v}) := \lim_{m\to\infty \atop \delta\searrow0} \varlimsup_{N\to\infty} \frac{1}{N^2}\gamma_{\mathrm{U}(N)}^{\otimes n}\big(\Gamma_u(\mathbf{v};N,m,\delta)\big). 
\end{equation}
Note that $\chi_u(\mathbf{v}) = \sum_{i=1}^n\chi_u(v_i)$ holds when $v_1,\dots,v_n$ are freely independent and that $\chi_u(\mathbf{v}) = 0$ if $\mathbf{v}$ is a freely independent family of Haar unitaries. Moreover, when $n=1$, $\chi_u(v_1) = \Sigma(v_1)$ holds.        

\subsection{Voiculescu's measure concentration result} (\cite{Voiculescu:IMRN98}) Let $(\mathfrak{A},\phi)$ be a non-commutative probability space, and $(\Omega_i)_{i\in I}$ be a
family of subsets of $\mathfrak{A}$. Denote by $(\mathfrak{A}^{\star I},\phi^{\star I})$ the reduced free product of copies of $(\mathfrak{A},\phi)$ indexed by $I$, and by $\lambda_i$ the canonical map of $\mathfrak{A}$ onto the $i$-th copy of $\mathfrak{A}$ in $\mathfrak{A}^{\star I}$. For each $\varepsilon > 0$ and $m \in \mathbb{N}$ we say that $(\Omega_i)_{i\in I}$ are $(m,\varepsilon)$-free (in $(\mathfrak{A},\phi)$) if 
\begin{equation*} 
\left|\phi(a_1\cdots a_k)
- \phi^{\star I}(\lambda_{i_1}(a_1)\cdots \lambda_{i_k}(a_k))\right| < \varepsilon 
\end{equation*} 
for all $a_j \in \Omega_{i_j}$, $i_j \in I$ with $1 \leq j \leq k$ and
$1 \leq k \leq m$. 

\begin{lemma}\label{L1}{\rm(Voiculescu \cite[Corollary 2.13]{Voiculescu:IMRN98})} Let $R>0$, $\varepsilon > 0$, $\theta > 0$ and $m \in \mathbb{N}$ be given. Then there exists $N_0 \in \mathbb{N}$ such that 
\begin{align*} 
\gamma_{\mathrm{U}(N)}^{\otimes p}\big(\big\{ (U_1,\dots,U_p) \in \mathrm{U}(N)^p:
&\ \{T_1^{(0)},\dots,T_{q_0}^{(0)}\},
\{U_1 T_1^{(1)}U_1^*,\dots,U_1 T_{q_1}^{(1)}U_1^*\}, \\ 
&\dots,\{U_p T_1^{(p)}U_p^*,\dots,U_p T_{q_p}^{(p)}U_p^*\}
\ \text{are $(m,\varepsilon)$-free} \big\}\big) > 1-\theta
\end{align*} 
whenever $N \geq N_0$ and $T_j^{(i)} \in M_N(\mathbb{C})$ with
$\Vert T_j^{(i)}\Vert_\infty \leq R$, $1 \leq p \leq m$, $1 \leq q_i \leq m$, $1 \leq i \leq p$, $1 \leq j \leq q_i$. 
$0 \leq i \leq p$. 
\end{lemma}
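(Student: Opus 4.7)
The plan is to combine the in-expectation asymptotic freeness of Haar unitary conjugations with dimension-free concentration of measure on $\mathrm{U}(N)$. For fixed matrices $T^{(i)}_j$ with $\Vert T^{(i)}_j\Vert_\infty \leq R$, enumerate the (finitely many) $*$-words of length $k \leq m$ in letters labelled by the bracketed sets $\Omega_0 = \{T^{(0)}_j\}_j$ and $\Omega_i = \{U_i T^{(i)}_j U_i^*\}_j$ ($1 \leq i \leq p$). For each such labelled word, set
\[
\Delta(U_1,\dots,U_p) := \mathrm{tr}_N\bigl(a_1 \cdots a_k\bigr) - \phi^{\star I}\bigl(\lambda_{i_1}(T^{(i_1)}_{\ell_1})\cdots \lambda_{i_k}(T^{(i_k)}_{\ell_k})\bigr),
\]
where $a_j \in \Omega_{i_j}$. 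Tracial unitary invariance makes the subtracted term independent of the $U_i$'s, so $\Delta$ depends on $\mathbf{U}$ only through the first summand, and the goal is to force $|\Delta| < \varepsilon$ simultaneously for all labelled words off a set of measure less than $\theta$.

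First I would establish a uniform expectation bound $|\mathbb{E}\Delta| = O(1/N)$ over all $T^{(i)}_j$ with $\Vert T^{(i)}_j\Vert_\infty \leq R$. The Weingarten calculus for $\mathrm{U}(N)$ expands $\mathbb{E}[\mathrm{tr}_N(a_1\cdots a_k)]$ as a sum over pairs of permutations weighted by Weingarten coefficients of order $N^{-k}$; the leading contribution reconstructs exactly the reduced-free-product value, and the remainder is bounded by a quantity depending only on $R$, $m$, and $p$ via standard estimates on those coefficients. Equivalently, one can appeal directly to Voiculescu's original proof of asymptotic freeness for independent Haar unitary conjugations, which is the in-mean half of the corollary at hand. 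Second, differentiating $a_1\cdots a_k$ along tangent directions to $\mathrm{U}(N)^p$ and using $\Vert T^{(i)}_j\Vert_\infty \leq R$ shows that $\Delta$ is $C(m,R)$-Lipschitz in the normalized Hilbert--Schmidt metric, with constant independent of $N$ and of the particular matrix data beyond the bound $R$.

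The last ingredient is the dimension-free concentration inequality for the Haar measure on $\mathrm{U}(N)$ (log-Sobolev inequality, Bakry--Emery curvature-dimension bound): every $C$-Lipschitz $f$ on $\mathrm{U}(N)^p$ satisfies
\[
\gamma_{\mathrm{U}(N)}^{\otimes p}\bigl(|f - \mathbb{E}f| > t\bigr) \leq 2\exp\bigl(-c N^2 t^2/C^2\bigr).
\]
Applied with $t = \varepsilon/2$ and combined with the expectation bound of the previous step, this forces $|\Delta| < \varepsilon$ off an exponentially small set once $N$ is large. A union bound over the finitely many labelled words of length $\leq m$ in at most $(p+1)m$ letters (with $p \leq m$) then yields the required $N_0 = N_0(R,\varepsilon,\theta,m)$, independent of the specific $T^{(i)}_j$.

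The main technical obstacle is the uniformity of the expectation bound in the matrix data: the Weingarten expansion involves traces of long products of $T^{(i)}_j$'s, and one must bound the full sum entirely in terms of $R$ and $m$, with no hidden dependence on spectral or trace data of the individual matrices. Once this uniform $O(1/N)$ estimate is in hand, the Lipschitz-plus-concentration upgrade and the final union bound are entirely routine.
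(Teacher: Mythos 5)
The paper quotes this lemma from Voiculescu's IMRN 1998 paper without giving a proof, and your sketch is essentially Voiculescu's original argument: asymptotic freeness in expectation (with an $O(1/N)$ error uniform over operator-norm-bounded matrix data), upgraded to a high-probability statement via the Lipschitz estimate for traces of words, dimension-free concentration of measure on $\mathrm{U}(N)^p$, and a union bound over the finitely many labelled words of length at most $m$. The approach and the key estimates are correct and match the cited source.
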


\section{Lower estimate of $\chi_\mathrm{orb}$ under unitary conjugation} 

This section is devoted to proving the following: 

\begin{theorem}\label{T1} Let $\mathbf{X}_1,\dots,\mathbf{X}_{n+1}$ be random self-adjoint multi-variables in $(\mathcal{M},\tau)$ and $\mathbf{v} = (v_1,\dots,v_n)$ be an $n$-tuple of unitaries in $\mathcal{M}$. Assume that $\mathbf{X} := \mathbf{X}_1\sqcup\cdots\sqcup\mathbf{X}_{n+1}$ has f.d.a.~in the sense of Voiculescu \cite[Definition 3.1]{Voiculescu:IMRN98} (or equivalently, $W^*(\mathbf{X})$ is $R^\omega$-embeddable) and that $\mathbf{X}$ and $\mathbf{v}$ are freely independent. Then 
\begin{equation}\label{Eq4} 
\begin{aligned} 
\chi_\mathrm{orb}(v_1\mathbf{X}_1 v_1^*,\dots,v_n\mathbf{X}_n v_n^*,\mathbf{X}_{n+1}) 
&\geq 
\chi_\mathrm{orb}(v_1\mathbf{X}_1 v_1^*,\dots,v_n\mathbf{X}_n v_n^*,\mathbf{X}_{n+1}:\mathbf{v}) \\
&\geq 
\chi_\mathrm{orb}(v_1\mathbf{X}v_1^*,\dots,v_n\mathbf{X}v_n^*,\mathbf{X}:\mathbf{v}) \\
&\geq 
\chi_u(\mathbf{v}).
\end{aligned}
\end{equation}
\end{theorem}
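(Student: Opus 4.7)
I would treat the three inequalities in \eqref{Eq4} one by one.

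The first inequality $\chi_\mathrm{orb}(\cdots)\geq\chi_\mathrm{orb}(\cdots:\mathbf{v})$ is the trivial monotonicity already recorded in Section~2: the ``$:\mathbf{v}$''-condition imposes the extra requirement that an auxiliary $\mathbf{V}\in\mathrm{U}(N)^n$ approximating the moments of $\mathbf{v}$ exist, so the corresponding $\Gamma_\mathrm{orb}$-set is only smaller. For the second inequality, I would run a subtuple argument: given microstates $\mathbf{B}_i\in\Gamma_R(\mathbf{X};N,m,\delta)$ occurring in the supremum on the right, let $\mathbf{A}_i$ be the sub-tuple of $\mathbf{B}_i$ corresponding to the $\mathbf{X}_i$-piece of $\mathbf{X}=\mathbf{X}_1\sqcup\cdots\sqcup\mathbf{X}_{n+1}$ for $i\leq n$ (and $\mathbf{A}_{n+1}$ the $\mathbf{X}_{n+1}$-piece of $\mathbf{B}_{n+1}$). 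Any $*$-monomial in the ``LHS-coordinates'' $((v_i\mathbf{X}_iv_i^*)_i,\mathbf{X}_{n+1},\mathbf{v})$ is a special case of one in the ``RHS-coordinates'' $((v_i\mathbf{X}v_i^*)_i,\mathbf{X},\mathbf{v})$, so any $(U_1,\dots,U_{n+1})$ witnessing the right-hand orbital condition with $(\mathbf{B}_i)$ also witnesses the left-hand one with $(\mathbf{A}_i)$; taking suprema gives the inequality.

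The third inequality is the main point, and I would prove it by invoking Voiculescu's measure concentration (Lemma~\ref{L1}). Fix $R>\max_{i,j}\Vert X_{ij}\Vert_\infty$. By the f.d.a.\ assumption, for each large $N$ there exists $\mathbf{B}\in\Gamma_R(\mathbf{X};N,m_1,\delta_1)$; I take $\mathbf{A}_1=\cdots=\mathbf{A}_{n+1}=\mathbf{B}$ in the supremum defining $\bar\chi_{\mathrm{orb},R}$. The key move is the measure-preserving bijection
$$
\Phi:(V_1,\dots,V_n,W)\longmapsto(V_1 W,\dots,V_n W,W)
$$
on $\mathrm{U}(N)^{n+1}$ (measure-preserving since right multiplication by $W$ is so on each factor). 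Set $\mathbf{B}':=W\mathbf{B}W^*$, still a microstate of $\mathbf{X}$. Then $(V_iW)\mathbf{B}(V_iW)^*=V_i\mathbf{B}'V_i^*$ and $W\mathbf{B}W^*=\mathbf{B}'$, and choosing $\tilde{\mathbf{V}}:=\mathbf{V}$ as the auxiliary unitary tuple in the definition of $\Gamma_\mathrm{orb}(\cdots:\mathbf{v})$, the orbital condition on $\Phi(V_1,\dots,V_n,W)$ becomes
$$
\bigl|\mathrm{tr}_N\bigl(h((V_i\mathbf{B}'V_i^*)_i,\mathbf{B}',\mathbf{V})\bigr)-\tau\bigl(h((v_i\mathbf{X}v_i^*)_i,\mathbf{X},\mathbf{v})\bigr)\bigr|<\delta
$$
for $*$-monomials $h$ of degree $\leq m$. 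Since $\mathbf{X}$ and $\mathbf{v}$ are freely independent in $(\mathcal{M},\tau)$, this is implied by: (i) $\mathbf{V}\in\Gamma_u(\mathbf{v};N,m_2,\delta_2)$, and (ii) the families $\{\mathbf{V}\}$ and $\{\mathbf{B}'\}$ being $(m_3,\varepsilon)$-free in $(M_N(\mathbb{C}),\mathrm{tr}_N)$, for appropriate $m_2,\delta_2,m_3,\varepsilon$ depending on $m,\delta$.

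Applying Lemma~\ref{L1} with $p=1$, $T^{(0)}=\mathbf{V}$ and $T^{(1)}=\mathbf{B}$ shows that (ii) holds on a set of $W$'s of Haar measure $\geq 1-\theta$ for $N$ sufficiently large, uniformly in $\mathbf{V}$. Hence by Fubini and the measure-preservation of $\Phi$,
$$
\gamma_{\mathrm{U}(N)}^{\otimes(n+1)}(\Gamma_\mathrm{orb})\geq(1-\theta)\,\gamma_{\mathrm{U}(N)}^{\otimes n}\bigl(\Gamma_u(\mathbf{v};N,m_2,\delta_2)\bigr),
$$
and taking logarithms, dividing by $N^2$, and passing to $\varlimsup_N$ then $\lim_{m\to\infty,\delta\searrow 0}$ yields the desired inequality. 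The main insight—and the only non-routine step—is the choice of $\Phi$: the ``extra'' $(n+1)$-st unitary $U_{n+1}=W$ serves as the Haar-random conjugator in Lemma~\ref{L1}, making the fixed microstate $\mathbf{B}$ asymptotically free from any prescribed microstate $\mathbf{V}$ of $\mathbf{v}$ while simultaneously preserving the structure of the orbital condition on the $(v_i\mathbf{X}v_i^*)$-coordinates.
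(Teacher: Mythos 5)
Your argument is correct and follows essentially the same route as the paper: the same change of variables $(V_1,\dots,V_n,W)\mapsto(V_1W,\dots,V_nW,W)$ with a common microstate $\mathbf{B}=\Xi(N)$ of $\mathbf{X}$ in all $n+1$ slots, the same application of Lemma \ref{L1} to make $W\mathbf{B}W^*$ asymptotically free from $(V_1,\dots,V_n)$ uniformly, and the same right-invariance/Fubini step. The only cosmetic differences are that you prove the second (sub-multi-variable monotonicity) inequality directly where the paper cites \cite[Theorem 2.6(6)]{Ueda:IUMJ14}, and you use $1-\theta$ where the paper fixes the constant $\tfrac12$.
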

\begin{proof} 
The first inequality in \eqref{Eq4} is trivial, and the second follows from (the conditional variant of) \cite[Theorem 2.6(6)]{Ueda:IUMJ14}. Hence it suffices only to prove the third inequality in \eqref{Eq4}. We may and do also assume that $\chi_u(\mathbf{v}) > -\infty$; otherwise the desired inequality trivially holds. 

Write $\mathbf{X} = (X_1,\dots,X_r)$ for simplicity. Set $R := \max\{\Vert X_j\Vert_\infty\,|\,1\leq j \leq r\}$, and let $m \in \mathbb{N}$ and $\delta>0$ be arbitrarily given. We can choose $\delta'>0$ in such a way that $\delta' \leq \delta$ and that, for every $N \in \mathbb{N}$, if $\mathbf{A} \in \Gamma_R(\mathbf{X};N,m,\delta')$ and $\mathbf{V}=(V_1,\dots,V_n) \in \Gamma_u(\mathbf{v};N,2m,\delta')$ are $(3m,\delta')$-free, then 
$$
\big|\mathrm{tr}_N(h(V_1\mathbf{A}V_1\sqcup\cdots\sqcup V_n\mathbf{A}V_n^*\sqcup\mathbf{A}\sqcup\mathbf{V})) - \tau(h(v_1\mathbf{X}v_1^*\sqcup\cdots\sqcup v_n\mathbf{X}v_n^*\sqcup\mathbf{v}))\big| < \delta
$$
whenever $h$ is a $*$-monomial of $(n+1)r + n$ indeterminates of degree not greater than $m$. For such a $\delta'>0$ the assumptions here ensure that there exists $N_0 \in \mathbb{N}$ so that $\Gamma_R(\mathbf{X}\,;\,N,m,\delta') \neq \emptyset$ and the probability measure 
$$
\nu_N := \frac{1}{\gamma_{\mathrm{U}(N)}^{\otimes n}(\Gamma_u(\mathbf{v};N,2m,\delta'))}\gamma_{\mathrm{U}(N)}^{\otimes n}\!\upharpoonright_{\Gamma_u(\mathbf{v};N,2m,\delta')}
$$
is well-defined whenever $N \geq N_0$. Let $\Xi(N) \in \Gamma_R(\mathbf{X};N,m,\delta')$ be arbitrarily chosen for each $N \geq N_0$. Note that $\Xi(N)$ also falls in $\Gamma_R(\mathbf{X};N,m,\delta) = \Gamma(v_i\mathbf{X}v_i^*;N,m,\delta)$ since $\delta' < \delta$. Then we define 
$$
\Theta(N,3m,\delta') := \{ (V_1,\dots,V_n,U) \in \mathrm{U}(N)^{n+1}\,|\,\text{$\{V_1,\dots,V_n\}$ and $U\Xi(N)U^*$ are $(3m,\delta')$-free}\}. 
$$
By what we have remarked at the beginning of this paragraph, we see that 
\begin{equation}\label{Eq5}
\begin{aligned}
&(V_1,\dots,V_n,U) \in \Theta(N,3m,\delta')\cap(\Gamma_u(\mathbf{v};N,3m,\delta')\times\mathrm{U}(N)) \\
&\Longrightarrow (V_1 U,\dots, V_n U, U) \in \Gamma_\mathrm{orb}(v_1\mathbf{X}v_1^*,\dots,v_n\mathbf{X}v_n^*,\mathbf{X}:(\Xi(N),\dots,\Xi(N)):\mathbf{v}\,;\,N,m,\delta).
\end{aligned}
\end{equation}
By Lemma \ref{L1} there exists $N_1 \geq N_0$ so that 
$$
\gamma_{\mathrm{U}(N)}\big(\{U \in \mathrm{U}(N)\,|\,(V_1,\dots,V_n,U) \in \Theta(N,3m,\delta')\}\big) > \frac{1}{2}
$$
for every $N \geq N_1$ and every $(V_1,\dots,V_n) \in \mathrm{U}(N)^n$. Consequently, we have 
\begin{equation*}
(\nu_N\otimes\gamma_{\mathrm{U}(N)})\big(\Theta(N,3m,\delta')\big) > \frac{1}{2}
\end{equation*}  
whenever $N \geq N_1$. Therefore, for every $N \geq N_1$ we have 
\begin{equation}\label{Eq6}
\begin{aligned}
&\frac{1}{2}\gamma_{\mathrm{U}(N)}^{\otimes n}\big(\Gamma_u(\mathbf{v};N,2m,\delta')\big) \\
&< 
\gamma_{\mathrm{U}(N)}^{\otimes n}\big(\Gamma_u(\mathbf{v};N,2m,\delta')\big)\times  
(\nu_N\otimes\gamma_{\mathrm{U}(N)})\big(\Theta(N,3m,\delta')\big) \\
&= 
\gamma_{\mathrm{U}(N)}^{\otimes (n+1)}\big(\Theta(N,3m,\delta')\cap(\Gamma_u(\mathbf{v};N,3m,\delta')\times\mathrm{U}(N))\big) \\
&\leq  
\gamma_{\mathrm{U}(N)}^{\otimes (n+1)}\big(\big\{(V_1,\dots,V_n,U) \in \mathrm{U}(N)^{n+1}\,|\,\\
&\qquad(V_1 U,\dots,V_n U,U) \in \Gamma_\mathrm{orb}(v_1\mathbf{X}v_1^*,\dots,v_n\mathbf{X}v_n^*,\mathbf{X}:(\Xi(N),\dots,\Xi(N)):\mathbf{v}\,;\,N,m,\delta) \big\}\big) \\
&= 
\int_{\mathrm{U}(N)}\gamma_{\mathrm{U}(N)}(dU)\,\gamma_{\mathrm{U}(N)}^{\otimes n}\big(\big\{(V_1,\dots,V_n) \in \mathrm{U}(N)^n\,|\,\\
&\qquad(V_1 U,\dots,V_n U,U) \in \Gamma_\mathrm{orb}(v_1\mathbf{X}v_1^*,\dots,v_n\mathbf{X}v_n^*,\mathbf{X}:(\Xi(N),\dots,\Xi(N)):\mathbf{v}\,;\,N,m,\delta) \big\}\big) \\
&= 
\int_{\mathrm{U}(N)}\gamma_{\mathrm{U}(N)}(dU)\,\gamma_{\mathrm{U}(N)}^{\otimes n}\big(\big\{(U_1,\dots,U_n) \in \mathrm{U}(N)^n\,|\,\\
&\qquad(U_1,\dots,U_n,U) \in \Gamma_\mathrm{orb}(v_1\mathbf{X}v_1^*,\dots,v_n\mathbf{X}v_n^*,\mathbf{X}:(\Xi(N),\dots,\Xi(N)):\mathbf{v}\,;\,N,m,\delta) \big\}\big) \\
&= 
\gamma_{\mathrm{U}(N)}^{\otimes(n+1)}\big(\Gamma_\mathrm{orb}(v_1\mathbf{X}v_1^*,\dots,v_n\mathbf{X}v_n^*,\mathbf{X}:(\Xi(N),\dots,\Xi(N)):\mathbf{v}\,;\,N,m,\delta)\big),
\end{aligned}
\end{equation} 
where the fourth line is obtained by \eqref{Eq5} and the sixth due to the right-invariance of the Haar probability measure $\gamma_{\mathrm{U}(N)}$. Hence  
\begin{align*} 
\chi_u(\mathbf{v}) 
&\leq 
\varlimsup_{N\to\infty}\frac{1}{N^2}\log\Big(\frac{1}{2}\gamma_{\mathrm{U}(N)}^{\otimes n}\big(\Gamma_u(\mathbf{v};N,2m,\delta'\big)\Big) \\
&\leq 
\varlimsup_{N\to\infty}\frac{1}{N^2}\log\gamma_{\mathrm{U}(N)}^{\otimes(n+1)}\big(\Gamma_\mathrm{orb}(v_1\mathbf{X}v_1^*,\dots,v_n\mathbf{X}v_n^*,\mathbf{X}:(\Xi(N),\dots,\Xi(N)):\mathbf{v}\,;\,N,m,\delta)\big) \\
&\leq 
\varlimsup_{N\to\infty}\frac{1}{N^2}\bar{\chi}_{\mathrm{orb},R}(v_1\mathbf{X}v_1^*,\dots,v_n\mathbf{X}v_n^*,\mathbf{X}:\mathbf{v};N,m,\delta),
\end{align*}
implying the desired inequality since $m, \delta$ are arbitrary.   
\end{proof} 

\begin{remark}\label{R1} {\rm Inequality \eqref{Eq4} is not optimal as follows. Assume that $(\mathcal{M},\tau) = (L(\mathbb{F}_r),\tau_{\mathbb{F}_r})\star(L(\mathbb{Z}_m),\tau_{\mathbb{Z}_m})$ and that $\mathbf{X}$ is the canonical free semicircular generators of $L(\mathbb{F}_r)$ and $v$ is a canonical generator of $L(\mathbb{Z}_m)$. Since $\tau(v) = 0$, one easily confirms that $v\mathbf{X}v^*$ and $\mathbf{X}$ are freely independent so that $\chi_\mathrm{orb}(v\mathbf{X}v^*,\mathbf{X}) = 0$. On the other hand, we know that $\chi_u(v) = -\infty$, since the spectral measure of $v$ has an atom.  
}
\end{remark} 

\begin{remark} \label{R2} {\rm The proof of Theorem \ref{T1} (actually, the idea of obtaining the second equality in \eqref{Eq6}) gives an alternative representation of $\bar{\chi}_{\mathrm{orb},R}(\mathbf{X}_1,\dots,\mathbf{X}_n\,;\,N,m,\delta)$: 
\begin{align*}
&\bar{\chi}_{\mathrm{orb},R}(\mathbf{X}_1,\dots,\mathbf{X}_n\,;\,N,m,\delta)\\
&\quad\quad= 
\sup_{\mathbf{A}_i \in (M_N(\mathbb{C})^\mathrm{sa})_R^{r(i)}}\log\Big(\gamma_{\mathrm{U}(N)}^{\otimes n-1}\big(\big\{ (U_i)_{i=1}^{n-1} \in \mathrm{U}(N)^{n-1}\,\big| \\
&\qquad\qquad\qquad\qquad (U_1,\dots,U_{n-1},I_N) \in \Gamma_\mathrm{orb}(\mathbf{X}_1,\dots,\mathbf{X}_n:(\mathbf{A}_i)_{i=1}^n\,;\,N,m,\delta)\big\}\big)\Big) \\
&\quad\quad\,= 
\sup_{\mathbf{A}_i \in \Gamma_R(\mathbf{X}_i\,;\,N,m,\delta)}\log\Big(\gamma_{\mathrm{U}(N)}^{\otimes n-1}\big(\big\{ (U_i)_{i=1}^{n-1} \in \mathrm{U}(N)^{n-1}\,\big| \\
&\qquad\qquad\qquad\qquad (U_1,\dots,U_{n-1},I_N) \in \Gamma_\mathrm{orb}(\mathbf{X}_1,\dots,\mathbf{X}_n:(\mathbf{A}_i)_{i=1}^n\,;\,N,m,\delta)\big\}\big)\Big),
\end{align*}
when $n \geq 2$.  
This corresponds to \cite[Remarks 10.2(c)]{Voiculescu:AdvMath99}. 
}
\end{remark} 

\section{Discussions}  

\subsection{Negative observation} In \cite[Theorem 2.6]{HiaiMiyamotoUeda:IJM09} the following formula was shown when all $\mathbf{X}_i$ are singletons: 
\begin{equation}\label{Eq7}
\chi(\mathbf{X}_1\sqcup\dots\sqcup\mathbf{X}_n) = \chi_\mathrm{orb}(\mathbf{X}_1,\dots,\mathbf{X}_n) + \sum_{i=1}^n \chi(\mathbf{X}_i).
\end{equation}
Note that the same formula trivially holds true (as $-\infty = -\infty$) even when one replaces each singleton $\mathbf{X}_i$ with a hyperfinite non-singleton $\mathbf{X}_i$, that is, $W^*(\mathbf{X}_i)$ is hyperfinite and $\mathbf{X}_i$ consists of at least two elements. Beyond the hyperfiniteness situation, inequality ($\leq$) in \eqref{Eq7} still holds (see \cite[Proposition 2.8]{Ueda:IUMJ14}), but equality unfortunately does not in general as follows. The following argument is attributed to Voiculescu \cite[\S\S14.1]{Voiculescu:AdvMath99}. Let $\mathbf{X}=(X_1,X_2)$ be a semicircular system in $\mathcal{M}$ and $v \in \mathcal{M}$ be a unitary such that $\tau(v) \neq 0$, $\chi_u(v) > -\infty$, and that $\mathbf{X}$ and $v$ are $*$-freely independent. Set $Y_i := vX_i v^*$, $i=1,2$, and $\mathbf{Y} := (Y_1,Y_2)$. By \cite[Proposition 2.5]{Voiculescu:AdvMath99} $W^*(X_1,X_2,Y_1,Y_2) = W^*(X_1,X_2,Y_1) = W^*(X_1,X_2,v)$, and hence by \cite[Proposition 3.8]{Voiculescu:InventMath94} 
$$
\chi(X_1,X_2,Y_1,Y_2) = \chi(X_1,X_2,Y_1,I) \leq \chi(X_1,X_2,Y_1) + \chi(I) = -\infty,
$$
where $I$ denotes the unit of $\mathcal{M}$. On the other hand, by Theorem \ref{T1} $\chi_\mathrm{orb}(\mathbf{X},\mathbf{Y}) \geq \chi_u(v) > -\infty$, implying that 
$$\chi(\mathbf{X}\sqcup\mathbf{Y}) = -\infty < \chi_\mathrm{orb}(\mathbf{X},\mathbf{Y}) + \chi(\mathbf{X}) + \chi(\mathbf{Y}).
$$
In particular, the quantity ``$C^\omega$" (or probably ``$C$" too) in \cite[Remark 2.9]{Ueda:IUMJ14} does not coincide with $\chi_\mathrm{orb}$ in general. An interesting question is whether or not $\chi(\mathbf{X}_1\sqcup\cdots\sqcup\mathbf{X}_n) > -\infty$ is enough to make the exact formula relating $\chi_\mathrm{orb}$ to $\chi$ hold. Note that $\chi_\mathrm{orb} = \widetilde{\chi}_\mathrm{orb}$ (Biane--Dabrowski's variant \cite{BianeDabrowski:AdvMath13}) holds under the assumption. Moreover, the orbital free entropy dimension $\delta_{0,\mathrm{orb}}(\mathbf{X},\mathbf{Y})$ must be zero in this case thanks to \cite[Proposition 4.3(5)]{Ueda:IUMJ14}, since $\chi_\mathrm{orb}(\mathbf{X},\mathbf{Y}) > -\infty$. Also $\delta_0(\mathbf{X}) = \delta_0(\mathbf{Y}) = 2$ is trivial. Note that $\chi_u(v) > -\infty$ forces that the probability distribution of $v$ has no atom. Thus, it is likely (if one believes that $\delta_0$ gives a $W^*$-invariant) that 
$$
\delta_0(\mathbf{X}\sqcup\mathbf{Y}) \overset{?}{=} 3 < 4 = \delta_{0,\mathrm{orb}}(\mathbf{X},\mathbf{Y}) + \delta_0(\mathbf{X}) + \delta_0(\mathbf{Y})
$$ 
is expected. This means that if $\delta_0(\mathbf{X}_1\sqcup\cdots\sqcup\mathbf{X}_n) = \delta_{0,\mathrm{orb}}(\mathbf{X}_1,\dots,\mathbf{X}_n) + \sum_{i=1}^n \delta_0(\mathbf{X}_i)$ held in general, then the $W^*$-invariance problem of $\delta_0$ would be resolved negatively. Hence it seems still interesting only to ask whether $\delta_0(\mathbf{X}\sqcup\mathbf{Y}) \lneqq 4$ or not.

\subsection{Other possible variants of $\chi_\mathrm{orb}$} The above discussion tells us that if a variant of $\chi_\mathrm{orb}$ satisfies Theorem \ref{T1}, then the variant does not satisfy the exact formula relating $\chi_\mathrm{orb}$ to $\chi$ in general. Following our previous work \cite{HiaiMiyamotoUeda:IJM09} with Hiai and Miyamoto one may consider the following variant of $\chi_\mathrm{orb}$: For each $1 \leq i \leq n$, we select an (operator norm-)bounded sequence $\{\Xi_i(N)\}_{N\in\mathbb{N}}$ with $\Xi_i(N) \in (M_N(\mathbb{C})^\mathrm{sa})^{r(i)}$ such that the joint distribution of $\Xi_i(N)$ under $\mathrm{tr}_N$ converges to that of $\mathbf{X}_i$ under $\tau$ as $N\to\infty$. Then we replace $\bar{\chi}_{\mathrm{orb},R}(\mathbf{X}_1,\dots,\mathbf{X}_n\,;\,N,m,\delta)$ in the definition of $\chi_\mathrm{orb}$ with 
\begin{equation*}
\begin{aligned}
&\chi_\mathrm{orb}(\mathbf{X}_1,\dots,\mathbf{X}_n:(\Xi_i(N))_{i=1}^n\,;\,N,m,\delta) \\
&\quad\quad:= 
\log\Big(\gamma_{\mathrm{U}(N)}^{\otimes n}\big(\Gamma_\mathrm{orb}(\mathbf{X}_1,\dots,\mathbf{X}_n:(\Xi_i(N))_{i=1}^n\,;\,N,m,\delta)\big)\Big),
\end{aligned}
\end{equation*}
and define 
\begin{equation*}
\begin{aligned}
&\chi_\mathrm{orb}(\mathbf{X}_1,\dots,\mathbf{X}_n:(\Xi_i(N))_{i=1}^n) \\
&\quad\quad:= 
\lim_{m\to\infty \atop \delta\searrow0} \varlimsup_{N\to\infty}\frac{1}{N^2}\chi_\mathrm{orb}(\mathbf{X}_1,\dots,\mathbf{X}_n:(\Xi_i(N))_{i=1}^n\,;\,N,m,\delta).
\end{aligned}
\end{equation*}
The conditional variant $\chi_\mathrm{orb}(\mathbf{X}_1,\dots,\mathbf{X}_n:(\Xi_i(N))_{i=1}^n:\mathbf{v})$ is defined exactly in the same fashion as $\chi_\mathrm{orb}(\mathbf{X}_1,\dots,\mathbf{X}_n:\mathbf{v})$. Then we may consider their supremum all over the possible choices of $(\Xi_i(N))_{i=1}^n$ under some suitable constraint as a variant of $\chi_\mathrm{orb}$.  

Even if the constraint of selecting sequences of multi-matrices is chosen to be the way of approximating to the freely independent copies of given random self-adjoint multi-variables, then the resulting variant of $\chi_\mathrm{orb}$ still satisfies Theorem \ref{T1}, and in turn does not satisfy the exact formula relating $\chi_\mathrm{orb}$ to $\chi$ in general. More precisely we can prove the following:

\begin{proposition}\label{P1} Let $\mathbf{X} = (X_j)_{j=1}^r$ be a random self-adjoint multi-variables in $(\mathcal{M},\tau)$ and $\mathbf{v} = (v_1,\dots,v_n)$ be an $n$-tuple of unitaries in $\mathcal{M}$. Assume that $\mathbf{X}$ has f.d.a.~(see Theorem \ref{T1}) and that $\mathbf{X}$ and $\mathbf{v}$ are freely independent. Then there exists a bounded sequence $\{(\Xi_i(N))_{i=1}^{n+1}\}_{N\in\mathbb{N}}$ with $\Xi_i(N) \in (M_N(\mathbb{C})^\mathrm{sa})^r$ such that the joint distribution of $\Xi_1(N)\sqcup\cdots\sqcup\Xi_{n+1}(N)$ under $\mathrm{tr}_N$ converges to the freely independent $n+1$ copies $\mathbf{X}_1^f\sqcup\cdots\sqcup\mathbf{X}_{n+1}^f$ of $\mathbf{X}$ {\rm(}n.b., the joint distribution of $\mathbf{X}$ is identical to that of every $v_i\mathbf{X}v_i^*${\rm)} under $\tau$ as $N\to\infty$, and moreover that
\begin{equation*}
\begin{aligned} 
&\chi_\mathrm{orb}(v_1\mathbf{X}v_1^*,\dots,v_n\mathbf{X}v_n^*,\mathbf{X}:(\Xi_i(N))_{i=1}^{n+1}) \\
&\qquad \geq 
\chi_\mathrm{orb}(v_1\mathbf{X}v_1^*,\dots,v_n\mathbf{X}v_n^*,\mathbf{X}:(\Xi_i(N))_{i=1}^{n+1}):\mathbf{v}) \geq 
\chi_u(\mathbf{v}). 
\end{aligned}
\end{equation*}
\end{proposition}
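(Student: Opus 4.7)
The approach is to build the sequence $(\Xi_i(N))_{i=1}^{n+1}$ out of conjugates of a \emph{single} matricial microstate of $\mathbf{X}$ by Haar-random unitaries chosen so as to produce approximate freeness, and then to reduce the nontrivial inequality to Theorem \ref{T1} via a right-translation change of variables on $\mathrm{U}(N)^{n+1}$.

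Set $R:=\max_j\|X_j\|_\infty$. Using the f.d.a.~hypothesis, I choose $\Xi(N)\in\Gamma_R(\mathbf{X};N,m_N,\delta_N)$ with $m_N\nearrow\infty$ and $\delta_N\searrow 0$. Next I apply Lemma \ref{L1} to the common data $\{\Xi(N)_j\}_{j=1}^r$ with parameters $(m_N,\varepsilon_N,\theta_N)$ for which $\varepsilon_N\searrow 0$ and $\theta_N<1$; this produces, for $N$ large, unitaries $W_1(N),\dots,W_n(N)\in\mathrm{U}(N)$ such that $\{\Xi(N)\},\{W_1(N)\Xi(N)W_1(N)^*\},\dots,\{W_n(N)\Xi(N)W_n(N)^*\}$ are $(m_N,\varepsilon_N)$-free in $(M_N(\mathbb{C}),\mathrm{tr}_N)$. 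I set $W_{n+1}(N):=I_N$ and $\Xi_i(N):=W_i(N)\Xi(N)W_i(N)^*$. A routine moment/freeness estimate, using that $\Xi(N)$ itself approximates the distribution of $\mathbf{X}$, shows that the joint distribution of $\Xi_1(N)\sqcup\cdots\sqcup\Xi_{n+1}(N)$ under $\mathrm{tr}_N$ converges to that of $n+1$ freely independent copies of $\mathbf{X}$, as required.

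The first inequality in the conclusion is trivial (conditioning on $\mathbf{v}$ only shrinks the orbital microstate set). For the second, I fix $m\in\mathbb{N}$ and $\delta>0$ and introduce the map
\[
\Phi_N(U_1,\dots,U_{n+1}):=(U_1W_1(N),\dots,U_nW_n(N),U_{n+1}),
\]
which preserves $\gamma_{\mathrm{U}(N)}^{\otimes(n+1)}$ by right-invariance of Haar measure. Because
\[
(U_iW_i(N))\,\Xi(N)\,(U_iW_i(N))^*=U_i\,\Xi_i(N)\,U_i^*\quad(1\leq i\leq n+1),
\]
$\Phi_N$ maps $\Gamma_\mathrm{orb}(v_1\mathbf{X}v_1^*,\dots,v_n\mathbf{X}v_n^*,\mathbf{X}:(\Xi_i(N))_{i=1}^{n+1}:\mathbf{v};N,m,\delta)$ bijectively onto $\Gamma_\mathrm{orb}(v_1\mathbf{X}v_1^*,\dots,v_n\mathbf{X}v_n^*,\mathbf{X}:(\Xi(N),\dots,\Xi(N)):\mathbf{v};N,m,\delta)$ (with any witness $\mathbf{V}\in\mathrm{U}(N)^n$ preserved). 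The two sets therefore carry equal Haar measure, so the chain of estimates concluding the proof of Theorem \ref{T1} applies verbatim to yield $\chi_u(\mathbf{v})$ as a lower bound.

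The only delicate step is the coordination of the auxiliary parameters $(m_N,\delta_N,\varepsilon_N,\theta_N)$ so that simultaneously the $\Xi(N)$ form microstates with ever-improving precision, the approximate freeness of the $\Xi_i(N)$ suffices for distributional convergence to free copies of $\mathbf{X}$, and for each fixed target $(m,\delta)$ the construction feeds into Theorem \ref{T1}'s estimate for all sufficiently large $N$. This is a standard diagonal bookkeeping and presents no new conceptual difficulty.
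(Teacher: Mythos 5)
Your proposal is correct and follows essentially the same route as the paper: take $\Xi_i(N)=W_i(N)\Xi(N)W_i(N)^*$ with $\Xi(N)$ a microstate of $\mathbf{X}$ and the $W_i(N)$ supplied by Lemma \ref{L1}, then use right-invariance of the Haar measure to transfer the lower bound from the constant-microstate case of Theorem \ref{T1}. Your packaging of the transfer as the explicit measure-preserving bijection $\Phi_N$ is just a tidier way of writing the same change of variables the paper performs inside its chain of integrals.
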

\begin{proof} 
Let $R > 0$ be sufficiently large. Since $\mathbf{X}$ has f.d.a., Lemma \ref{L1} shows that for each $m \in \mathbb{N}$ and $\delta > 0$ one has $\{((U_i)_{i=1}^{n+1},\mathbf{A}) \in \mathrm{U}(N)^{n+1}\times ((M_N(\mathbb{C})^\mathrm{sa})_R)^r \mid (U_i \mathbf{A}U_i^*)_{i=1}^{n+1} \in \Gamma_R(\mathbf{X}_1^f\sqcup\cdots\sqcup\mathbf{X}_{n+1}^f\,;\,N,m,\delta)\} \neq \emptyset$ for all sufficiently large $N \in \mathbb{N}$. By using this fact, it is easy to choose a bounded sequence $\Xi(N) \in ((M_N(\mathbb{C})^\mathrm{sa})_R)^r$ and a sequence $(W_i(N))_{i=1}^{n+1} \in \mathrm{U}(N)^{n+1}$ in such a way that both the joint distributions of $\Xi(N)$ and of $W_1(N)\Xi(N)W_1(N)^*\sqcup\cdots\sqcup W_{n+1}(N)\Xi(N)W_{n+1}(N)^*$ under $\mathrm{tr}_N$ converge to those of $\mathbf{X}$ and of $\mathbf{X}_1^f\sqcup\cdots\sqcup\mathbf{X}_{n+1}^f$, respectively,  under $\tau$ as $N\to\infty$. Set $\Xi_i(N) := W_i(N)\Xi(N)W_i(N)^*$, $1 \leq i \leq n+1$, and we will prove that $(\Xi_i(N))_{i=1}^{n+1}$ is a desired sequence.  

For given $m \in \mathbb{N}$ and $\delta >0$, we choose $0 < \delta' < \delta$ as in the proof of Theorem \ref{T1}. Let $\nu_N$ and $\Theta(N,3m,\delta')$ be also chosen exactly in the same way as in the proof of Theorem \ref{T1}. We can choose $N_0 \in \mathbb{N}$ in such a way that $\Xi(N) \in \Gamma_R(\mathbf{X}\,;\,N,m,\delta')$ and $\nu_N$ is well-defined as long as $N \geq N_0$. By the same reasoning as in the proof of Theorem \ref{T1} we have 
\begin{equation}\label{Eq8}
\begin{aligned}
&(V_1,\dots,V_n,U) \in \Theta(N,3m,\delta')\cap(\Gamma_u(\mathbf{v};N,3m,\delta')\times\mathrm{U}(N)) \\
&\Longrightarrow (V_1 U,\dots, V_n U, U) \in \Gamma_\mathrm{orb}(v_1\mathbf{X}v_1^*,\dots,v_n\mathbf{X}v_n^*,\mathbf{X}:(\Xi(N),\dots,\Xi(N)):\mathbf{v}\,;\,N,m,\delta) \\
& \Longleftrightarrow (V_1 U W_1(N)^*,\dots, V_n U W_n(N)^*, U W_{n+1}(N)^*) \\
&\qquad\qquad\qquad\qquad\qquad\in \Gamma_\mathrm{orb}(v_1\mathbf{X}v_1^*,\dots,v_n\mathbf{X}v_n^*,\mathbf{X}:(\Xi_i(N))_{i=1}^{n+1}:\mathbf{v}\,;\,N,m,\delta).
\end{aligned}
\end{equation}
As in the proof of Theorem \ref{T1} again, Lemma \ref{L1} shows that there exists $N_1 \geq N_0$ so that 
\begin{equation*}
(\nu_N\otimes\gamma_{\mathrm{U}(N)})\big(\Theta(N,3m,\delta')\big) > \frac{1}{2}
\end{equation*}  
whenever $N \geq N_1$. Therefore, for every $N \geq N_1$ we have 
\begin{align*}
&\frac{1}{2}\gamma_{\mathrm{U}(N)}^{\otimes n}\big(\Gamma_u(\mathbf{v};N,2m,\delta')\big) \\
&< 
\gamma_{\mathrm{U}(N)}^{\otimes n}\big(\Gamma_u(\mathbf{v};N,2m,\delta')\big)\times  
(\nu_N\otimes\gamma_{\mathrm{U}(N)})\big(\Theta(N,3m,\delta')\big) \\
&= 
\gamma_{\mathrm{U}(N)}^{\otimes (n+1)}\big(\Theta(N,3m,\delta')\cap(\Gamma_u(\mathbf{v};N,3m,\delta')\times\mathrm{U}(N))\big) \\
&\leq  
\gamma_{\mathrm{U}(N)}^{\otimes (n+1)}\big(\big\{(V_1,\dots,V_n,U) \in \mathrm{U}(N)^{n+1}\,|\,\\
&\qquad\qquad\qquad(V_1 U W_1(N)^*,\dots,V_n U W_n(N)^*,UW_{n+1}(N)^*) \\
&\qquad\qquad\qquad\qquad\in \Gamma_\mathrm{orb}(v_1\mathbf{X}v_1^*,\dots,v_n\mathbf{X}v_n^*,\mathbf{X}:(\Xi_i(N))_{i=1}^{n+1}:\mathbf{v}\,;\,N,m,\delta) \big\}\big) \\
&= 
\int_{\mathrm{U}(N)}\gamma_{\mathrm{U}(N)}(dU)\,\gamma_{\mathrm{U}(N)}^{\otimes n}\big(\big\{(V_1,\dots,V_n) \in \mathrm{U}(N)^n\,|\,\\
&\qquad\qquad(V_1 UW_1(N)^*,\dots,V_n UW_n(N)^*,UW_{n+1}(N)^*) \\
&\qquad\qquad\qquad\qquad\in \Gamma_\mathrm{orb}(v_1\mathbf{X}v_1^*,\dots,v_n\mathbf{X}v_n^*,\mathbf{X}:(\Xi_i(N))_{i=1}^{n+1}:\mathbf{v}\,;\,N,m,\delta) \big\}\big) \\
&= 
\int_{\mathrm{U}(N)}\gamma_{\mathrm{U}(N)}(dU)\,\gamma_{\mathrm{U}(N)}^{\otimes n}\big(\big\{(V_1,\dots,V_n) \in \mathrm{U}(N)^n\,|\,\\
&\qquad\qquad(V_1 UW_{n+1}(N)W_1(N)^*,\dots,V_n UW_{n+1}(N)W_n(N)^*,U) \\
&\qquad\qquad\qquad\qquad\in \Gamma_\mathrm{orb}(v_1\mathbf{X}v_1^*,\dots,v_n\mathbf{X}v_n^*,\mathbf{X}:(\Xi_i(N))_{i=1}^{n+1}:\mathbf{v}\,;\,N,m,\delta) \big\}\big) \\
&= 
\int_{\mathrm{U}(N)}\gamma_{\mathrm{U}(N)}(dU)\,\gamma_{\mathrm{U}(N)}^{\otimes n}\big(\big\{(U_1,\dots,U_n) \in \mathrm{U}(N)^n\,|\,\\
&\qquad(U_1,\dots,U_n,U) \in \Gamma_\mathrm{orb}(v_1\mathbf{X}v_1^*,\dots,v_n\mathbf{X}v_n^*,\mathbf{X}:(\Xi_i(N))_{i=1}^{n+1}:\mathbf{v}\,;\,N,m,\delta) \big\}\big) \\
&= 
\gamma_{\mathrm{U}(N)}^{\otimes(n+1)}\big(\Gamma_\mathrm{orb}(v_1\mathbf{X}v_1^*,\dots,v_n\mathbf{X}v_n^*,\mathbf{X}:(\Xi_i(N))_{i=1}^{n+1}:\mathbf{v}\,;\,N,m,\delta)\big),
\end{align*} 
where the fourth line is obtained by \eqref{Eq8} and both the sixth and the seventh due to the right-invariance of the Haar probability measure $\gamma_{\mathrm{U}(N)}$. Hence the desired inequality follows as in the proof of Theorem \ref{T1}. 
\end{proof}

In view of our work \cite{Ueda:Preprint2016} and Voiculescu's liberation theory \cite{Voiculescu:AdvMath99}, a candidate constraint of selecting sequences of multi-matrices may be the way of approximating to $\mathbf{X}_1\sqcup\cdots\sqcup\mathbf{X}_n$ globally, though it probably does not satisfy the exact formula relating $\chi_\mathrm{orb}$ to $\chi$ in general. 

\section*{Acknowledgment} 

The author would like to thank the referee for his or her careful reading and pointing out several typos.

\end{document}